\theoremstyle{plain}
   \newtheorem{theorem}{Theorem}
   \newtheorem{proposition}[theorem]{Proposition}
   \newtheorem{prop}[theorem]{Proposition}
   \newtheorem*{theorem*}{Theorem}
   \newtheorem*{maintheorem}{Main Theorem}
\theoremstyle{definition}
   \newtheorem{question}[theorem]{Question}
   \newtheorem{remark}[theorem]{Remark}
\numberwithin{equation}{section}
\newcommand\Symm{\mathfrak{S}}
\newcommand\rank{\operatorname{rank}}
\newcommand\fix{\operatorname{fix}}
\newcommand{\defn}[1]{\textbf{#1}}
\newcommand\ZZ{{\mathbb{Z}}}
\newcommand\RR{{\mathbb{R}}}
\newcommand\FF{{\mathbb{F}}}
\newcommand\GL{{\mathrm{GL}}}
\newcommand{\GLnFq}{\GL_n(\Fq)}
\newcommand{\Fq}{\FF_q}
\newcommand{\op}{\operatorname}
\newcommand{\bmat}[1]{\begin{bmatrix} #1 \end{bmatrix}}
\title{$\GLnFq$-analogues of some properties of $n$-cycles in $\Symm_n$}
\author{Joel Brewster Lewis}
\begin{document}

\begin{abstract}
We give analogues in the finite general linear group of two elementary results concerning long cycles and transpositions in the symmetric group: first, that the long cycles are precisely the elements whose minimum-length factorizations into transpositions yield a generating set, and second, that a long cycle together with an appropriate transposition generates the whole symmetric group.
\end{abstract}

\maketitle

The goal of this paper is to give analogues in the finite general linear group $\GLnFq$ of the following well known theorems about the behavior of transpositions and $n$-cycles in the symmetric group $\Symm_n$.
\begin{theorem*}
\begin{enumerate}
\item A permutation $w \in \Symm_n$ is an $n$-cycle if and only if the factors in every minimum-length factorization of $w$ as a product of transpositions form a generating set for $\Symm_n$.
\item If $c$ is an $n$-cycle and $t = (a \ b)$ is a transposition that exchanges two adjacent entries of $c$, then $\langle c, t \rangle = \Symm_n$.
\end{enumerate}
\end{theorem*}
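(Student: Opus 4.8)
The plan is to reduce both parts to one structural fact about minimum-length factorizations. Recall that the least number of transpositions needed to express $w\in\Symm_n$ is $\ell_T(w)=n-c(w)$, where $c(w)$ is the number of cycles of $w$ (counting fixed points). I would establish the following \emph{forest lemma}: if $w=t_1\cdots t_k$ with $k=\ell_T(w)$, and we view each $t_i=(a_i\ b_i)$ as an edge of a graph $\Gamma$ on vertex set $\{1,\dots,n\}$, then $\Gamma$ is a forest whose connected components are exactly the supports of the cycles of $w$. The proof tracks the partial products $w_j=t_1\cdots t_j$: multiplying a permutation by $(a\ b)$ changes its number of cycles by $+1$ if $a,b$ lie in a common cycle and by $-1$ otherwise, so starting from $w_0=e$ (with $n$ cycles) gives $c(w)\ge n-k$, with equality iff at every step $a_j$ and $b_j$ lie in distinct cycles of $w_{j-1}$. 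In the equality (minimum-length) case a short induction on $j$ shows simultaneously that $\Gamma_j=\{t_1,\dots,t_j\}$ is a forest and that its components are the cycle supports of $w_j$: adjoining $t_j$ joins two distinct components without creating a cycle, mirroring the way it fuses the two corresponding cycles of $w_{j-1}$ into one.

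For part (1), if $w$ is an $n$-cycle then $c(w)=1$, so by the forest lemma \emph{every} minimum-length factorization gives a spanning tree $\Gamma$ of $\{1,\dots,n\}$; and the transpositions indexing the edges of a spanning tree generate $\Symm_n$ (the standard fact, via connectivity and the identity $(a\ c)=(b\ c)(a\ b)(b\ c)$, or by inducting on a leaf). Conversely, if $w$ is not an $n$-cycle then $c(w)\ge 2$, and for \emph{any} minimum-length factorization the graph $\Gamma$ has components $C_1,\dots,C_{c(w)}$, so all the factors lie in the proper subgroup $\Symm_{C_1}\times\cdots\times\Symm_{C_{c(w)}}$ of $\Symm_n$ and therefore do not generate. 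Thus ``$w$ is an $n$-cycle'' is equivalent to ``the factors of every minimum-length factorization generate'' (and also to ``the factors of some minimum-length factorization generate'').

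For part (2), the hypothesis says $t=(a\ b)$ with $b=c(a)$. Conjugating by powers of $c$, the group $\langle c,t\rangle$ contains $c^k t c^{-k}=\bigl(c^k(a)\ c^{k+1}(a)\bigr)$ for all $k$; as $k$ runs through $0,1,\dots,n-1$, the element $c^k(a)$ runs through all of $\{1,\dots,n\}$, so these are the $n$ edges of the cyclic graph of $c$. Omitting any one of them leaves a path, i.e.\ a spanning tree, so $\langle c,t\rangle=\Symm_n$ by the spanning-tree fact. (Equivalently: conjugate the configuration so that $c=(1\ 2\ \cdots\ n)$ and $t=(1\ 2)$, and note that $c^k t c^{-k}$ then runs through the adjacent transpositions $(1\ 2),(2\ 3),\dots,(n{-}1\ n)$.)

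There is no serious obstacle here: the only genuine content is the forest lemma, and inside it the observation that equality in $c(w)\ge n-k$ forces the edge set to be a forest with components equal to the cycle supports; the remaining ingredients are classical. The only point needing a little care is that part (1) is phrased with ``every,'' so the converse direction must use that \emph{no} minimum-length factorization of a non-$n$-cycle can generate, which is immediate from the forest lemma.
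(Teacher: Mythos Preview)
Your proof is correct. Note, however, that the paper does not actually prove this statement: it is quoted in the introduction as a well-known fact about $\Symm_n$, serving only to motivate the $\GLnFq$ analogue (the Main Theorem), which is what the paper proves.

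It is nonetheless instructive to compare your argument with the paper's strategy for that analogue. The paper proves part~(2) first and then deduces the forward direction of part~(1) from it: given a minimum-length factorization $c = t_1 \cdots t_n$ of a Singer cycle, the subgroup generated by the factors contains $\langle t_1, c\rangle$, which already equals $\GLnFq$ by part~(2). Your approach to $\Symm_n$ is structurally different and more self-contained: the forest lemma handles both directions of part~(1) simultaneously and independently of part~(2), which you treat separately by conjugating $t$ by powers of $c$. Your route yields the extra information that for $\Symm_n$ the weak and strong quasi-Coxeter properties coincide (your parenthetical remark), a fact the paper observes fails for $\GLnFq$; on the other hand, the paper's reduction of~(1) to~(2) is the move that survives the passage to $\GLnFq$, where no tidy combinatorial ``forest'' description of minimum-length reflection factorizations is available.
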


Following \cite{LW}, we call the equivalent property in the first part of the theorem the \defn{strong quasi-Coxeter property}.  The second half of the theorem can easily be generalized to the case that the cyclic distance between $a$ and $b$ in the cycle $c$ is relatively prime to $n$; it is occasionally mis-stated (e.g., in \cite[Ch.~III, ex.~(8)]{ledermann}) to assert that \emph{any} transposition will do, but it is easy to see (for example) that the transposition $(1 \ 3)$ normalizes the subgroup generated by the $4$-cycle $(1 \ 2 \ 3 \ 4)$, and consequently that these two elements generate a group of order $8$ rather than the whole symmetric group $\Symm_4$.

In formulating our $q$-analogues, we follow the perspective of \cite{CSP, ReinerStantonWebb, LRS, LewisMorales, HLR}): first, we view the group $\Symm_n$ as a real reflection group, acting on $\RR^n$ by permutation of coordinates.  From this perspective, the transpositions are precisely the \defn{reflections}, i.e., those elements that fix a \defn{hyperplane} (a subspace of codimension $1$) pointwise.\footnote{It is conventional but not necessary to specify that real reflections be orthogonal, or that they have order $2$.  For a finite subgroup of $\GL_n(\RR)$, these properties are implied (after a standard averaging argument) by the seemingly more general definition here. }  In $\GLnFq$, we extend this definition verbatim and define a reflection to be any element that fixes a hyperplane pointwise. Then $\GLnFq$ is also generated by its subset of reflections. Second, we take the analogue of an $n$-cycle to be a \defn{Singer cycle} for $\GLnFq$.  These elements can be characterized in many ways; we have that $c \in \GLnFq$ is a Singer cycle if any of the following equivalent statements hold:
\begin{itemize}
\item $c$ is \defn{irreducible} (i.e., it stabilizes no nontrivial subspace of $V := \Fq^n$) of maximum possible multiplicative order;
\item $c$ has multiplicative order $q^n - 1$;
\item $c$ acts transitively on $V \smallsetminus \{0\}$; and
\item $c$ has as eigenvalue over $\overline{\Fq}$ one of the cyclic generators for $\FF_{q^n}^\times$.
\end{itemize}
Our main theorem is as follows.

\begin{maintheorem}\label{thm:main}
\begin{enumerate}
\item An element $g \in \GLnFq$ is a Singer cycle if and only if the factors in every minimum-length factorization of $g$ as a product of reflections form a generating set for $\GLnFq$.
\item Suppose that $c \in \GLnFq$ is a Singer cycle and $t \in \GLnFq$ is a reflection that does not normalize the cyclic subgroup $\langle c\rangle$; then $\langle c, t\rangle = \GLnFq$.
\end{enumerate}
\end{maintheorem}

The structure of the paper is as follows.  We begin with some background on finite fields, Singer cycles, and the group $\GLnFq$.  We then prove part (2) of the main theorem as Theorem~\ref{conj:cycle and reflection}.  As part of the proof, we give a concrete description of the reflections $t$ that normalize a Singer cycle (Proposition~\ref{prop:normalizing reflections when n = 2}).  In particular, such reflections exist only when $n = 2$, so that for $n \geq 3$ we have that any Singer cycle and any reflection together generate $\GLnFq$.  We then use part (2) to prove part (1) of the main theorem as Theorem~\ref{thm:strong qc}.  We end with a few open questions and remarks.

\section{Background}

We begin with some basic facts about finite fields (as found in, e.g., \cite[Chs.~1.1 \& 2.1]{Hou}), Singer cycles, and irreducible elements.


Fix a prime power $q$ and let $\Fq$ denote the field of cardinality $q$.  For any $n > 1$ and any irreducible polynomial $f = x^n + a_{n - 1} x^{n - 1} + \ldots + a_0 \in \Fq[x]$, the quotient $\Fq[x] / (f)$ of the polynomial ring by the ideal generated by $f$ is the (unique up to isomorphism) finite field $\FF_{q^n}$ of order $q^n$, generated as a ring over $\Fq$ by the equivalence class of $x$.  This field is the splitting field of the irreducible polynomial $f$, and $f$ is always separable (i.e., it has $n$ distinct roots in $\FF_{q^n}$).  
The unique isomorphic copy of $\Fq$ inside $\FF_{q^n}$ may be identified as the fixed set of the \defn{Frobenius automorphism}
\begin{align*}
F \colon \quad \FF_{q^n} & \to \FF_{q^n} \\
\alpha & \mapsto \alpha^q
\end{align*}
If $f$ is an irreducible polynomial of degree $n$ over $\Fq$ and $\alpha \in \FF_{q^n}$ is a root of $f$, then the complete set of roots of $f$ is the Frobenius orbit $\{\alpha, F(\alpha) = \alpha^q, F^2(\alpha) = \alpha^{q^2}, \ldots, F^{n - 1}(\alpha) = \alpha^{q^{n - 1}}\}$ of $\alpha$.

The embedding $\Fq \hookrightarrow \FF_{q^n}$ endows the latter field with the structure of a vector space over $\Fq$.  For a fixed element $\alpha$ of $\FF_{q^n}$, multiplication by $\alpha$ is $\Fq$-linear, and, consequently, if we choose any ordered basis $B$ for $\FF_{q^n}$ over $\Fq$, we get an injection
\begin{equation}\label{eq:inclusion}
\FF_{q^n}^\times \hookrightarrow \GL_{\Fq}(\FF_{q^n}) \cong \GLnFq
\end{equation}
from the multiplicative subgroup $\FF_{q^n}^\times$ into the group $\GLnFq$ of invertible $n \times n$ matrices over $\Fq$.  The minimal polynomial of an element $\alpha \in \FF_{q^n}$ over $\Fq$ is equal to the minimal polynomial of its embedded image in $\GLnFq$; in particular, if $\alpha$ is a field generator for $\FF_{q^n}$ over $\Fq$, its image in $\GLnFq$ has irreducible characteristic polynomial.  Such elements of $\GLnFq$ may be characterized in many ways.

\begin{prop}[{\cite[Prop.~4.4]{LRS}}]
For an element $g$ of $\GLnFq$, the following are equivalent:
\begin{itemize}
\item for some choice of basis $B$ for $\FF_{q^n}$ over $\Fq$ and some generator $\alpha$ for $\FF_{q^n}$ over $\Fq$, $g$ is the image of $\alpha$ under the map \eqref{eq:inclusion};
\item the characteristic polynomial of $g$ is irreducible in $\Fq[x]$; and
\item $g$ stabilizes no nontrivial proper subspace of $\Fq^n$
.
\end{itemize}
\end{prop}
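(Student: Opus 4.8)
The plan is to translate the whole statement into the language of $\Fq[x]$-modules. Regard $V := \Fq^n$ as a module over $\Fq[x]$ in which $x$ acts as $g$; then the $g$-stable subspaces of $V$ are precisely the $\Fq[x]$-submodules of $V$, and the structure theorem for finitely generated torsion modules over a PID (equivalently, the rational canonical form of $g$) gives $V \cong \bigoplus_{i=1}^{m} \Fq[x]/(p_i^{e_i})$ with each $p_i \in \Fq[x]$ monic irreducible and each $e_i \ge 1$. Let $\chi_g$ and $m_g$ be the characteristic and minimal polynomials of $g$, and denote by (a), (b), (c) the three bulleted conditions in the order listed. I would establish (a) $\Rightarrow$ (b), (b) $\Rightarrow$ (a), and (b) $\Leftrightarrow$ (c). The first implication is essentially contained in the paragraph preceding the proposition: the minimal polynomial of the image of a field generator $\alpha$ equals the minimal polynomial of $\alpha$ over $\Fq$, which is irreducible of degree $n$; since $\chi_g$ is monic of degree $n$ and divisible by $m_g$, the two polynomials coincide, so $\chi_g$ is irreducible.

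For (b) $\Leftrightarrow$ (c), observe that $\chi_g = \prod_{i=1}^{m} p_i^{e_i}$, so condition (b) says exactly that $m = 1$ and $e_1 = 1$, i.e.\ that $V \cong \Fq[x]/(p)$ for a single monic irreducible $p$ (which is then forced to have degree $n$). I claim condition (c) picks out the same case. Indeed, if $m \ge 2$, then any one of the summands $\Fq[x]/(p_i^{e_i})$ is a nonzero proper submodule of $V$; if $m = 1$ but $e_1 \ge 2$, then $p_1 \cdot \bigl(\Fq[x]/(p_1^{e_1})\bigr)$ is a nonzero proper submodule; and if $V \cong \Fq[x]/(p)$ with $p$ irreducible, then $V$ is a one-dimensional vector space over the field $\Fq[x]/(p)$ and so is a simple $\Fq[x]$-module, with no nonzero proper submodule at all. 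Hence both (b) and (c) are equivalent to the assertion that $V$ is a simple $\Fq[x]$-module, and therefore to each other.

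It remains to deduce (a) from (b). If $\chi_g$ is irreducible, then $m_g = \chi_g$ --- since $m_g$ and $\chi_g$ have the same irreducible factors, are both monic, and $m_g \ne 1$ --- so $V$ is a cyclic $\Fq[x]$-module and $V \cong \Fq[x]/(\chi_g)$. The right-hand side is a field isomorphic to $\FF_{q^n}$, in which $x$ acts as multiplication by its own residue class $\alpha$, and $\alpha$ generates this field over $\Fq$. Transporting any ordered $\Fq$-basis of $\Fq[x]/(\chi_g)$ back to $V$ along this isomorphism therefore displays $g$ as the image of the field generator $\alpha$ under the map \eqref{eq:inclusion}, which is condition (a).

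I do not expect a genuine obstacle here. The one point needing care is the module-theoretic core of (b) $\Leftrightarrow$ (c): enumerating the submodules of $\bigoplus \Fq[x]/(p_i^{e_i})$ well enough to recognize that $V$ is simple precisely when it is a single copy of some $\Fq[x]/(p)$ with $p$ irreducible. Everything else is routine bookkeeping with characteristic and minimal polynomials. If one prefers to avoid the structure theorem entirely, the implication (c) $\Rightarrow$ (b) can instead be proved by contraposition: from a monic irreducible factor $p \mid \chi_g$ with $p \ne \chi_g$ one constructs a nonzero proper $g$-stable subspace, namely $\ker p(g)$, unless $\chi_g$ is a proper power of $p$, in which case $p(g) = 0$ and one takes any proper nonzero $\Fq[x]/(p)$-subspace of $V$.
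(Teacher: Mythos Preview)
The paper does not give its own proof of this proposition: it is stated as background and attributed to \cite[Prop.~4.4]{LRS}, so there is nothing to compare against.  Your argument via the $\Fq[x]$-module structure on $V$ is correct and is the standard one; the cycle (a) $\Rightarrow$ (b) $\Rightarrow$ (a) and the equivalence (b) $\Leftrightarrow$ (c) are all handled cleanly.

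One small imprecision in your closing parenthetical: you write that $\ker p(g)$ fails to be a proper subspace ``unless $\chi_g$ is a proper power of $p$, in which case $p(g)=0$.''  In fact $\chi_g=p^e$ with $e\ge 2$ does not by itself force $p(g)=0$; the exceptional case is rather $m_g=p$ (which then forces $\chi_g=p^e$, since $m_g$ and $\chi_g$ share irreducible factors).  Your fallback in that case---viewing $V$ as a vector space of dimension $e\ge 2$ over the field $\Fq[x]/(p)$ and taking any proper nonzero subspace---is fine once the hypothesis is stated correctly.  This does not affect your main argument, which does not rely on this alternative.
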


The multiplicative subgroup $\FF_{q^n}^\times$ is a cyclic group.  The cyclic generators for $\FF_{q^n}^\times$ are called \defn{primitive elements}; each primitive element is obviously a field generator for $\FF_{q^n}$ over $\Fq$.  The minimal polynomial of a primitive element over $\Fq$ (necessarily irreducible of degree $n$) is called a \defn{primitive polynomial}.  The images of primitive elements under any of the maps in Equation~\eqref{eq:inclusion} are called \defn{Singer cycles}.  These elements of $\GLnFq$ may also be characterized in many ways.

\begin{prop}[{see \cite[\S2.1]{Brookfield} and \cite[Lem.~3]{Gill}}]
For an element $g$ of $\GLnFq$, the following are equivalent:
\begin{itemize}
\item for some choice of basis $B$ for $\FF_{q^n}$ over $\Fq$ and some primitive element $\zeta$ for $\FF_{q^n}^\times$, $g$ is the image of $\zeta$ under the map \eqref{eq:inclusion};
\item $g$ is irreducible and has maximum multiplicative order among the irreducible elements;
\item $g$ has multiplicative order $q^n - 1$;
\item the characteristic polynomial of $g$ is primitive in $\Fq[x]$; 
\item $g$ acts transitively on $\Fq^n \smallsetminus \{0\}$; and
\item $g$ has as an eigenvalue over $\overline{\Fq}$ a primitive element of $\FF_{q^n}$.
\end{itemize}
\end{prop}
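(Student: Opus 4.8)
The plan is to prove the six conditions pairwise equivalent by building a web of implications that all pass through the first bullet (that $g$ is the image under \eqref{eq:inclusion} of a primitive element $\zeta$), relying throughout on the preceding proposition --- which already equates ``$g$ irreducible'', ``$\chi_g$ irreducible'', and ``$g$ is the image of a field generator'' --- together with the two facts recorded above: that \eqref{eq:inclusion} is an injective group homomorphism, and that the characteristic polynomial of the image of a field generator $\alpha$ of $\FF_{q^n}$ equals the minimal polynomial of $\alpha$ over $\Fq$.

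First I would collect the (short) implications out of the first bullet. Let $g$ be the image of the primitive element $\zeta$. Its characteristic polynomial is the minimal polynomial of $\zeta$, which is a primitive polynomial by definition, giving the fourth bullet. Injectivity of \eqref{eq:inclusion} gives $\op{ord}(g) = \op{ord}(\zeta) = q^n - 1$, the third bullet. Under the identification $V \cong \FF_{q^n}$ the element $g$ acts as multiplication by $\zeta$, so the $\langle g\rangle$-orbit of any $v \neq 0$ is $\langle\zeta\rangle\cdot v = \FF_{q^n}^\times \cdot v = \FF_{q^n}^\times$, the fifth bullet. And the eigenvalues of $g$ over $\overline\Fq$ are the Frobenius conjugates $\zeta, \zeta^q, \dots, \zeta^{q^{n-1}}$, each of which is primitive because the Frobenius is a field automorphism and automorphisms preserve multiplicative order --- the sixth bullet.

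Next I would close the web with the converses, each routed back to the first bullet. If $\chi_g$ is primitive it is in particular irreducible, so by the preceding proposition $g$ is the image of a field generator $\alpha$, and then (as recorded above) $\chi_g$ is the minimal polynomial of $\alpha$; since every root of a primitive polynomial is a primitive element, $\alpha$ is primitive --- recovering the first bullet, and, as then $\op{ord}(g) = q^n - 1$ is the largest order of any element of $\FF_{q^n}^\times$ and hence of any irreducible element of $\GLnFq$, also the second. Conversely any irreducible element is the image of a field generator and so has order dividing $q^n - 1$, while the image of a primitive element attains this bound, so an irreducible element of maximum order has order exactly $q^n - 1$: the second bullet implies the third. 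If $g$ has a primitive eigenvalue $\zeta$, then $[\Fq(\zeta):\Fq] = n$, so the minimal polynomial of $\zeta$ is a degree-$n$ primitive polynomial dividing the degree-$n$ polynomial $\chi_g$ and therefore equal to it: the sixth bullet implies the fourth. Finally, if $\langle g\rangle$ acts transitively on the $q^n - 1$ nonzero vectors, an orbit--stabilizer count gives $(q^n-1)\mid\op{ord}(g)$, and, since $\langle g\rangle$ is cyclic, the stabilizer of a chosen nonzero vector is the subgroup $\langle g^{\,q^n-1}\rangle$; by commutativity of $\langle g\rangle$ and transitivity this subgroup fixes every nonzero vector, forcing $g^{\,q^n-1} = 1$, so $\op{ord}(g) = q^n - 1$ and the fifth bullet implies the third.

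The one step that takes real work is deducing from $\op{ord}(g) = q^n - 1$ that $g$ is irreducible --- once that is known, the preceding proposition produces a field generator $\alpha$ with $\op{ord}(\alpha) = \op{ord}(g) = q^n - 1$, necessarily primitive, which brings us back to the first bullet. Since $\gcd(q^n - 1, q) = 1$, an element of $\GLnFq$ of order $q^n - 1$ is semisimple, hence diagonalizable over $\overline\Fq$, and its eigenvalues are the roots of the irreducible factors of $\chi_g$. Writing $d_1, \dots, d_r$ for the distinct degrees of those factors, the roots of a factor of degree $d_i$ lie in $\FF_{q^{d_i}}^\times$, so $\op{ord}(g)$ --- the lcm of the orders of the eigenvalues --- divides $\op{lcm}_i(q^{d_i} - 1) \le \prod_i(q^{d_i}-1)$. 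If $g$ were reducible then each $d_i \le n-1$ while $\sum_i d_i \le n$, whence $\prod_i(q^{d_i}-1) \le (q^{d_1}-1)\,q^{\,n-d_1} = q^n - q^{\,n-d_1} < q^n - 1$, contradicting $\op{ord}(g) = q^n - 1$; so $g$ is irreducible. I expect this to be the main obstacle, though a mild one: the only non-elementary input is the standard fact that an element of a finite linear group whose order is coprime to the characteristic is semisimple, and everything else is the displayed arithmetic estimate. With every arrow in place --- the first bullet reaches all five others, and each of the five reaches it --- the six conditions are equivalent.
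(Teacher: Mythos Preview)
The paper does not supply its own proof of this proposition: it is stated with citations to \cite[\S2.1]{Brookfield} and \cite[Lem.~3]{Gill} and no argument, so there is nothing in the paper to compare your approach against.

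That said, your proof is correct and well organized. The web of implications you build closes properly: the first bullet reaches the other five directly, and each of those routes back to the first via the chain you describe. The only step requiring any care is ``$\op{ord}(g)=q^n-1$ forces $g$ irreducible'', and your argument there is sound: semisimplicity from coprimality of the order with the characteristic is standard, and the arithmetic estimate $\prod_i(q^{d_i}-1)\le (q^{d_1}-1)q^{\,n-d_1}<q^n-1$ is valid once one notes (as you do) that the distinct degrees $d_i$ of the irreducible factors of $\chi_g$ satisfy $\sum_i d_i\le n$ and each $d_i\le n-1$ in the reducible case. The orbit--stabilizer argument for the fifth bullet is also clean: commutativity of $\langle g\rangle$ really does force the point stabilizer to act trivially on the whole orbit.

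One small presentational remark: when you write ``$\sum_i d_i\le n$'' it might help the reader to say explicitly that this holds because the sum of the \emph{distinct} degrees is bounded by the sum with multiplicity, which equals $\deg\chi_g=n$; as written the inequality is correct but the justification is implicit.
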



Our main result makes reference to the minimum number $k$ such that a given element $g \in \GLnFq$ can be written as a product $g = t_1 \cdots t_k$ of $k$ reflections.  This number (called the \defn{reflection length} of $g$) has a simple intrinsic formula.

\begin{proposition}[{\cite{Dieudonne}, \cite[Prop.~2.16]{HLR}}]\label{prop:length}
Consider $V$ a finite-dimensional vector space, and $g$ an element of $\GL(V)$.  Then the minimum length of any factorization of $g$ as a product of reflections is $\dim(V) - \dim\fix(g)$.
\end{proposition}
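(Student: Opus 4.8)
The plan is to prove the two inequalities $\ell_R(g) \le \dim(V) - \dim\fix(g)$ and $\ell_R(g) \ge \dim(V) - \dim\fix(g)$ separately, where I write $\ell_R(g)$ for the reflection length and $\fix(g) = \ker(g - 1)$ for the fixed space. For the lower bound, the key observation is that a reflection $t$ fixes a hyperplane pointwise, so $\dim\fix(t) = \dim(V) - 1$, and more importantly $\mov(t) := \im(t - 1)$ is $1$-dimensional. For a product $g = t_1 \cdots t_k$, one checks that $\mov(g) = \im(g - 1) \subseteq \mov(t_1) + \cdots + \mov(t_k)$, so that $\dim\mov(g) \le k$; since $\dim\mov(g) = \dim(V) - \dim\fix(g)$, this gives $k \ge \dim(V) - \dim\fix(g)$ for every reflection factorization, hence the lower bound on $\ell_R(g)$.

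For the upper bound I would argue by induction on $d := \dim(V) - \dim\fix(g)$, the case $d = 0$ (so $g$ is the identity) being trivial. For the inductive step, suppose $d \ge 1$. The idea is to find a reflection $t$ such that $h := tg$ satisfies $\dim(V) - \dim\fix(h) = d - 1$; then by induction $h$ is a product of $d-1$ reflections, so $g = t^{-1} h = t h$ (reflections being involutions, or: $t^{-1}$ is again a reflection) is a product of $d$ reflections. To produce such a $t$: pick a vector $v \notin \fix(g)$ and consider the vector $w := gv - v \ne 0$ lying in $\mov(g)$. One wants a reflection $t$ with $tw = v - gv$... more carefully, one wants $t$ to map $gv$ back in such a way that the fixed space grows. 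The cleanest route is to choose $t$ to be a reflection fixing a hyperplane that contains $\fix(g)$ and is chosen so that $tg$ fixes both $\fix(g)$ and the extra vector $v$; this is possible precisely because $\dim\fix(g) < \dim V$ gives room to find such a hyperplane, using that $v - gv \ne 0$. One then verifies $\fix(g) + \Span(v) \subseteq \fix(tg)$, so $\dim\fix(tg) \ge \dim\fix(g) + 1$, which combined with the lower bound already established forces equality $\dim(V) - \dim\fix(tg) = d - 1$, closing the induction.

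The main obstacle is the construction of the reflection $t$ in the inductive step, i.e., checking that one can always peel off one dimension of the moved space with a single reflection; this requires a small linear-algebra argument distinguishing (or uniformly treating) the cases according to whether the line $\Span(gv - v)$ meets $\fix(g)$ or not, and verifying that the resulting $t$ is genuinely a reflection (fixes a hyperplane pointwise and is not the identity) over the arbitrary field $\Fq$ — so one cannot appeal to orthogonality or to characteristic-zero tricks. Everything else (the submodule inclusion $\mov(g) \subseteq \sum \mov(t_i)$, the dimension identity $\dim\mov(g) = \dim V - \dim\fix(g)$, the involution property) is routine. Since the statement and its proof are classical (due to Dieudonné), I would keep the write-up brief and emphasize only the reflection-construction step.
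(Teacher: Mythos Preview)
The paper does not give its own proof of this proposition: it is stated with citation to Dieudonn\'e and to \cite[Prop.~2.16]{HLR} and then used as a black box. So there is nothing in the paper to compare your argument against.

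Your outline is the standard one and is correct. Two small comments. First, your ``forces equality'' sentence is slightly off: the lower bound on reflection length does not directly bound $\dim\fix(tg)$ from above. What you really use is the same subadditivity $\mov(g)\subseteq\mov(t^{-1})+\mov(tg)$ applied to $g=t^{-1}(tg)$, which gives $\dim\fix(tg)\le\dim\fix(g)+1$. (Alternatively, you do not need equality at all: $\dim\fix(tg)\ge\dim\fix(g)+1$ already yields $\ell_R(g)\le d$ by induction, and then the lower bound pins $\ell_R(g)=d$.) Second, the ``main obstacle'' you flag dissolves without a case split once you observe that (i) $gv\notin\fix(g)$ whenever $v\notin\fix(g)$ (use invertibility of $g$), and (ii) a vector space over any field is not the union of two proper subspaces, so in the annihilator $\fix(g)^\perp\subseteq V^*$ one can choose $\phi$ with $\phi(v)\ne 0\ne\phi(gv)$. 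Taking $H=\ker\phi$ and defining $t$ by $t|_H=\mathrm{id}$, $t(gv)=v$ gives a genuine reflection (invertibility of $t$ is exactly the condition $v\notin H$, i.e., $\phi(v)\ne 0$) with $\fix(tg)\supseteq\fix(g)+\Span(v)$. No orthogonality or characteristic assumptions are needed.
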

\begin{remark}\label{remark:length}
Since $\fix(t \cdot u) \supseteq \fix(t) \cap \fix(u)$,  in any factorization $g = t_1 \cdots t_k$ of $g$ as a product of $k := \dim(V) - \dim\fix(g)$ reflections, we have the equality $\fix(g) = \bigcap_{i = 1}^k \fix(t_i)$, and so in particular each factor $t_i$ fixes every vector fixed by $g$.
\end{remark}


Given a monic polynomial $f = x^n + \ldots + a_1x + a_0$ over some field $\FF$, its \defn{companion matrix} $C_f$ is the $n \times n$ matrix 
\[
C_f := \begin{bmatrix}
&&& -a_0 \\
1 &&& -a_1 \\
& \ddots && \vdots \\
&&1& -a_{n - 1}
\end{bmatrix}
\]
over $\FF$.  It is easy to see that $f$ is the characteristic polynomial of $C_f$.  In the case that $f$ is irreducible, it follows that $C_f$ is the rational canonical form of any matrix with characteristic polynomial $f$, and so in particular that every irreducible matrix is similar to the companion matrix of its characteristic polynomial.

We will make use of the following corrected version of \cite[Thm.~2]{Gill}.

\begin{theorem}\label{thm:gill}
Let $f, g \in \Fq[x]$ be distinct monic polynomials of degree $n$ such that $f$ is primitive and the constant term of $g$ is nonzero.  Then the group $\langle C_f, C_g \rangle$ generated by the companion matrices $C_f, C_g$ of $f$ and $g$ is equal to $\GLnFq$ unless $n = 2$ and $C_g$ belongs to the normalizer of $\langle C_f\rangle$ in $\GL_2(\Fq)$.
\end{theorem}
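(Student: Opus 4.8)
The plan is to deduce this corrected version of Gill's theorem from the Main Theorem, part (2), which will be proven earlier as Theorem~\ref{conj:cycle and reflection}. The key observation is that the companion matrix $C_f$ of a primitive polynomial $f$ is a Singer cycle, while the companion matrix $C_g$ of a polynomial $g$ with nonzero constant term is an invertible matrix; the issue is that $C_g$ need not be a reflection, so part~(2) of the Main Theorem does not apply directly. First I would set $H := \langle C_f, C_g \rangle$ and observe that, since $C_f$ is a Singer cycle of order $q^n-1$, the subgroup $H$ contains the cyclic Singer subgroup $\langle C_f\rangle$; moreover $H$ acts irreducibly on $V = \Fq^n$, since any $H$-invariant subspace would in particular be $C_f$-invariant, hence trivial.

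The heart of the argument is to produce inside $H$ a reflection that does not normalize $\langle C_f \rangle$, and then invoke Theorem~\ref{conj:cycle and reflection}. The natural candidate is the commutator-type element $s := C_g C_f C_g^{-1} C_f^{-1}$, or more generally some product of the two generators and their inverses; one expects that for generic $g$ this element is a reflection (it fixes the intersection of two hyperplanes, which has codimension at most $2$, and one computes that it in fact has a fixed space of codimension exactly $1$). Here is where I would use the specific companion-matrix structure: writing $C_f$ and $C_g$ explicitly, the difference $C_f - C_g$ has rank $1$ (its columns differ only in the last column, by the vector of differences of the coefficients of $f$ and $g$, as long as $f \ne g$), and rank-one perturbations of this shape are exactly what produce reflections. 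So the plan is: exhibit an explicit element of $H$ built from $C_f$ and $C_g$ whose fixed space has codimension $1$ -- for instance $C_f^{-1}C_g$ or $C_g^{-1}C_f$, which differs from the identity by a rank-one matrix supported in the last column -- call it $t$, so $t$ is a reflection lying in $H$.

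Once we have a reflection $t \in H$ together with the Singer cycle $C_f \in H$, there is a dichotomy: either $t$ does not normalize $\langle C_f\rangle$, in which case $\GLnFq = \langle C_f, t\rangle \subseteq H$ by Theorem~\ref{conj:cycle and reflection} and we are done; or $t$ normalizes $\langle C_f\rangle$, which by Proposition~\ref{prop:normalizing reflections when n = 2} can only happen when $n = 2$. In the latter case we must argue directly. When $n = 2$, every nonidentity proper subgroup of $\GL_2(\Fq)$ containing a Singer cycle is either $\langle C_f\rangle$ itself, or the normalizer $N := N_{\GL_2(\Fq)}(\langle C_f\rangle)$, which has order $2(q^2-1)$; since $H$ contains $C_g \ne C_f$ but $H$ might equal $N$, the only way to fail to generate all of $\GL_2(\Fq)$ is precisely for $C_g$ to lie in $N$. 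This recovers the stated exception. One must also handle the degenerate case $f = g$, which is excluded by hypothesis, and check that the rank-one element $t$ we chose is genuinely not the identity (it is, since $f \ne g$) and genuinely invertible (it is a product of invertible matrices).

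The main obstacle I anticipate is the $n=2$ analysis and, more subtly, making sure the reflection $t$ we extract actually lies in $H$ while also being a reflection -- one cannot simply take $C_g$ itself, since a companion matrix of a degree-$n$ polynomial is a reflection only when its characteristic polynomial has the form $(x-1)^{n-1}(x-a)$, which is not assumed. The cleanest route, which I would pursue, is to verify that $C_f^{-1}C_g - I$ (equivalently $C_g - C_f$, up to multiplication by the invertible $C_f^{-1}$, noting $C_f^{-1}$ is itself a polynomial in $C_f$ hence its image is controlled) has rank exactly $1$ using the explicit companion-matrix entries, so that $C_f^{-1}C_g$ is automatically a reflection in $H$; then the whole theorem reduces to Theorem~\ref{conj:cycle and reflection} plus the $n = 2$ bookkeeping against the normalizer described in Proposition~\ref{prop:normalizing reflections when n = 2}.
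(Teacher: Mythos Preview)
Your proposal is circular. In the paper, Theorem~\ref{thm:gill} is a \emph{prerequisite} for Theorem~\ref{conj:cycle and reflection}: the proof of the latter explicitly invokes Theorem~\ref{thm:gill} at the key step (``Thus, by Theorem~\ref{thm:gill}, we have two possibilities\ldots''). So you cannot deduce Theorem~\ref{thm:gill} from Theorem~\ref{conj:cycle and reflection} unless you first supply an independent proof of the latter, which you do not. Your observation that $C_f^{-1}C_g$ is a reflection (because $C_f - C_g$ has rank one) is correct and is in fact exactly the inverse of the manipulation the paper carries out in the proof of Theorem~\ref{conj:cycle and reflection} (there, one starts with a reflection $t'$ fixing $x_n = 0$ and observes that $c\cdot t'$ is a companion matrix). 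So the two statements are essentially equivalent modulo this rank-one trick, but that equivalence does not help you prove either one from scratch.

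The paper's actual treatment of Theorem~\ref{thm:gill} is entirely different: it is not proved from first principles in this paper, but rather quoted from Gill's work, with a correction supplied in Section~\ref{sec:gill}. Gill's argument rests on the classification of overgroups of a Singer subgroup in $\GLnFq$ (via field extension subgroups and their normalizers), and the paper's contribution is to locate the gap in Gill's proof (the failed implication $ad = n \Rightarrow a(d-1) < n-1$ when $a=1$) and patch it by showing directly that a second companion matrix $C_g$ in the normalizer of $\langle C_f\rangle$ forces $n - 1$ to be a proper divisor of $n$, hence $n = 2$. Your $n = 2$ endgame also leans on an unproved structural claim (that the only proper overgroups of a Singer subgroup in $\GL_2(\Fq)$ are the Singer subgroup and its normalizer), which is again of the same flavor as what Gill's argument actually establishes.
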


For a discussion of how and why Theorem~\ref{thm:gill} differs from \cite[Thm.~2]{Gill}, see Section~\ref{sec:gill} below.

\section{Proof of the main theorem}

Our strategy is to prove part (2) first, then use it to prove (1).  We begin with a concrete example in $\GL_2(\Fq)$ of a situation in which a Singer cycle and reflection fail to generate the whole group; it is perhaps analogous to the example of $(1 \ 2 \ 3 \ 4)$ and $(1 \ 3)$ in $\Symm_4$.

\begin{prop}
\label{prop:normalizing reflections when n = 2}
Let $c$ be a Singer cycle in $\GL_2(\Fq)$ and let $\zeta$ be one of its eigenvalues over $\FF_{q^2}$.
\begin{enumerate}[(a)]
\item There is a basis for $\Fq^2$ in which $c$ has matrix $\begin{bmatrix} 0 & -\zeta^{q + 1} \\ 1 & \zeta + \zeta^q \end{bmatrix}$.  In this basis, $\begin{bmatrix} 1 & 0 \\ -\zeta^{-1} - \zeta^{-q} & -1 \end{bmatrix}$ is the matrix of a reflection $t$ such that $t^2 = 1$ and $tct = c^q$.  
\item With $c, t$ as in (a), $\langle t, c\rangle$ is a proper subgroup of $\GL_2(\Fq)$ 
for any $q > 2$.
\item With $c, t$ as in (a), if $t'$ is a reflection that normalizes $\langle c \rangle$, then $t' = c^k t c^{-k}$ for some integer $k$.
\end{enumerate}
\end{prop}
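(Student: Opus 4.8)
The plan is to dispatch the three parts in order, since each builds on the one before: part~(a) is a direct matrix computation, part~(b) follows by comparing orders once $\langle c,t\rangle$ is identified, and the real content is in part~(c), which amounts to classifying the reflections in the normalizer $N := N_{\GL_2(\Fq)}(\langle c\rangle)$.

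For part~(a) I would begin from the fact that a Singer cycle is irreducible, hence similar to the companion matrix of its characteristic polynomial. Since the eigenvalues of $c$ over $\FF_{q^2}$ are the Frobenius-conjugate pair $\zeta,\zeta^q$, that polynomial is $x^2 - (\zeta+\zeta^q)x + \zeta^{q+1}$, and its companion matrix is the displayed one; concretely, one may take the corresponding basis of $\FF_{q^2}$ over $\Fq$ to be $\{1,\zeta\}$, on which $c$ acts by multiplication by $\zeta$. Abbreviating $s := \zeta+\zeta^q$ and $p := \zeta^{q+1}$ (so that $\zeta^{-1}+\zeta^{-q} = s/p$), the relations $t^2 = I$ and $tct = c^q$ are then checked by multiplying $2\times 2$ matrices: for the latter, compute $tct$ and observe that it equals the matrix of multiplication by $\zeta^q$ in the basis $\{1,\zeta\}$, which is $c^q$. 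Finally, $t$ is a reflection because $t\ne I$ — this requires $\zeta^{-1}+\zeta^{-q} = \Tr(\zeta^{-1}) \ne 0$ in the one case ($q$ even) where $\mathrm{diag}(1,-1)=I$, and this holds since $\zeta^{-1}\notin\Fq$ — together with $\rank(t-I)=1$.

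Turning to part~(c), the first step is to compute $N$. If $g$ normalizes $\langle c\rangle$, then $gcg^{-1}\in\langle c\rangle$ is conjugate to $c$, hence shares its (irreducible) characteristic polynomial; but the only elements of $\langle c\rangle$ with that characteristic polynomial are $c$ and $c^q$, since $c^j$ has eigenvalues $\zeta^j,\zeta^{qj}$ over $\overline{\Fq}$. Because $c$ is irreducible, its centralizer in $M_2(\Fq)$ is the field $\Fq[c]\cong\FF_{q^2}$, whence $C_{\GL_2(\Fq)}(c) = \Fq[c]^\times = \langle c\rangle$ (the last equality because $\zeta$ is primitive); together with $tct^{-1}=c^q$ this gives $N = \langle c\rangle \sqcup t\langle c\rangle$, of order $2(q^2-1)$. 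No element of $\langle c\rangle$ is a reflection, since a reflection has $1$ as an eigenvalue while no nonidentity power $c^j$ does; so a reflection $t'\in N$ lies in $t\langle c\rangle$, say $t' = tc^k$. Using $tct^{-1}=c^q$ one finds $(t')^2 = tc^ktc^k = c^{k(q+1)}$, which is the scalar matrix $\zeta^{k(q+1)}I$ because $\zeta^{q+1}\in\Fq^\times$; since $t'$ — and hence $(t')^2$ — has $1$ as an eigenvalue, $\zeta^{k(q+1)}=1$, forcing $(q-1)\mid k$, say $k = m(q-1)$. A short manipulation of $tct^{-1}=c^q$ then gives $c^m t c^{-m} = c^{-m(q-1)}t = tc^{-qm(q-1)}$, and $-qm(q-1)\equiv m(q-1)\pmod{q^2-1}$, so $c^m t c^{-m} = tc^{m(q-1)} = t'$, proving~(c). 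Part~(b) is then immediate: $\langle c,t\rangle = N$ has order $2(q^2-1)$, so its index in $\GL_2(\Fq)$ is $(q^2-q)/2 = q(q-1)/2$, which exceeds $1$ exactly when $q>2$.

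The computations throughout are elementary; the points deserving the most care are the identification $C_{\GL_2(\Fq)}(c)=\langle c\rangle$ (equivalently, that $N/\langle c\rangle$ is generated by the class of $t$, i.e.\ of the Frobenius), which controls the structure of $N$ and hence both~(b) and~(c), and the modular bookkeeping at the end of~(c) that pins down $k$ modulo $q^2-1$ and verifies $t'=c^m t c^{-m}$. The matrix arithmetic in~(a) is routine apart from the small characteristic-$2$ check needed to see that $t\ne I$.
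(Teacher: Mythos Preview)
Your proof is correct. Parts~(a) and~(b) follow the same lines as the paper, though you sharpen~(b) by actually identifying $\langle c,t\rangle$ with the normalizer $N$ rather than merely bounding its order by $2(q^2-1)$ as the paper does.

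For part~(c) your route is genuinely different. The paper argues geometrically: a reflection $t'$ normalizing $\langle c\rangle$ must permute the two eigenlines of $c$ over $\FF_{q^2}$, and the case of preserving them is excluded because $t'$ would then have three distinct eigenlines in a two-dimensional space. In the exchanging case, the paper writes $t'$ in the eigenbasis as $\bmat{0 & \alpha \\ \alpha^{-1} & 0}$, computes its fixed line, and uses the constraint that this line be defined over $\Fq$ to show there are exactly $q+1$ such reflections; since the $q+1$ conjugates $c^k t c^{-k}$ ($k=0,\ldots,q$) are already distinct normalizing reflections, they exhaust the list. You instead compute $N = \langle c\rangle \sqcup t\langle c\rangle$ directly from the centralizer calculation $C_{\GL_2(\Fq)}(c)=\langle c\rangle$, rule out the coset $\langle c\rangle$ because no nonidentity power of $c$ has eigenvalue~$1$, and then for $t'=tc^k$ use the scalar $(t')^2=c^{k(q+1)}$ together with the eigenvalue-$1$ condition to force $(q-1)\mid k$, after which a short identity in $\langle c\rangle\rtimes\langle t\rangle$ gives $t'=c^m t c^{-m}$. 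Your argument is more algebraic and has the bonus of yielding the exact structure of $N$ (hence~(b) for free); the paper's is more geometric and makes the parametrization of the $q+1$ reflections by their fixed lines transparent.
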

\begin{proof}
Since $c$ has irreducible characteristic polynomial, the second eigenvalue of $c$ is $\zeta^q$, and so the characteristic polynomial of $c$ is $f := x^2 - (\zeta + \zeta^q)x + \zeta^{q + 1} \in \Fq[x]$.  Since $c$ is irreducible, there is a basis for $\Fq^2$ in which $c$ has as matrix the companion matrix $C_f$ of this polynomial.  Thus the requested choice of basis in (a) exists, and the linear transformation $t$ is well defined.  The given matrix for $t$ has characteristic polynomial $(t - 1)(t + 1)$, so has eigenvalues $1$ and $-1$.  When $q$ is odd, it follows immediately that $t$ is a reflection and that $t^2 = 1$.  When $q$ is a power of $2$, we still must rule out the possibility that $t$ is the identity; observe that $\zeta$ has order $q^2 - 1 > q - 1$, so $\zeta^{-1} \neq \zeta^{-q}$, and consequently the given matrix for $t$ is not the identity matrix.  By a direct computation, we have that the matrix of $tct$ is $\begin{bmatrix} \zeta + \zeta^q & \zeta^{q + 1} \\
-1 & 0\end{bmatrix}$.  To see that this is the matrix of $c^q$, it is enough to check that, extending scalars to $\FF_{q^2}$, the eigenvectors of $c$ corresponding to the eigenvalues $\zeta$ and $\zeta^q$ are respectively $\bmat{-\zeta^q \\ 1}$ and $\bmat{-\zeta \\ 1}$, while these two vectors are eigenvectors for $tct$ with respective eigenvalues $\zeta^q$ and $\zeta = (\zeta^{q})^q$.  This proves part (a).

Choose an element $g$ of $\langle t, c\rangle$.  Since $t$ and $c$ are of finite order, $g$ can be written as a product of positive integer powers of $t$ and $c$.  By repeatedly applying the relation $ct = tc^q$ from (a), we can convert this to an expression of the form $g = t^i c^j$ for some $i, j \in \ZZ_{\geq 0}$.  Since there are only $2(q^2 - 1)$ distinct expressions of this form, the group $\langle t, c\rangle$ has at most $2(q^2 - 1)$ elements.  For $q > 2$, we have $2(q^2 - 1) < (q^2 - q)(q^2 - 1) = |\GL_2(\Fq)|$, and consequently $\langle t, c \rangle \subsetneq \GL_2(\Fq)$.  This proves (b).

Since $t$ normalizes $\langle c \rangle$ (by part (a)), so too do all of its conjugates $c^k t c^{-k}$.  If $t'$ is any reflection that normalizes $\langle c \rangle$, it must either preserve the two eigenspaces of $c$ or exchange them.  The first is not possible because the fixed space of $t'$ is an eigenspace of $t'$ that is distinct from the two eigenspaces of $c$ (it is defined by an equation over $\Fq$), and a linear transformation on a two-dimensional space can only have three distinct eigenlines if it is a scalar transformation.  If $t'$ exchanges the two eigenspaces of $c$, then its matrix in the eigenbasis of $c$ is $\bmat{0 & \alpha \\ \beta  & 0}$ for some $\alpha,\beta\in\FF_{q^2}$.  It is easy to check that such a matrix can be a reflection if and only if $\beta =\alpha^{-1}$.  After a computation, we see that in this case, the fixed space of $t'$ is spanned by the vector $\bmat{-\alpha\zeta - \zeta^q \\ \alpha+1}$ (in the standard basis).  The span of this vector must be defined over $\Fq$; hence, there are exactly $q + 1$ possible values of $\alpha$ (either $\alpha = -1$ or $\alpha$ is the solution to $-\alpha \zeta - \zeta^q = a(\alpha + 1)$ for some $a\in \Fq$).  But we already have $q+1$ distinct reflections that normalize $\langle c\rangle$, namely, $c^k t c^{-k}$ for $k =0, \ldots, q$.  Thus $t'$ must be one of these, as claimed.
\end{proof}

The next result shows that there are no ``bad'' reflections other than those that appear in Proposition~\ref{prop:normalizing reflections when n = 2}.

\begin{theorem}
\label{conj:cycle and reflection}
Fix a Singer cycle $c$ in $\GLnFq$.  If $t$ is a reflection in $\GLnFq$ that is not covered by Proposition~\ref{prop:normalizing reflections when n = 2} (in particular, for any reflection $t$ if $n \geq 3$), then $\langle c, t\rangle = \GLnFq$.
\end{theorem}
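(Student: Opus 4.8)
The plan is to reduce directly to the corrected Gill theorem (Theorem~\ref{thm:gill}) by exhibiting, inside $\langle c, t\rangle$, two companion matrices that satisfy its hypotheses. The key observation is that replacing $t$ by $ct$ does not change the group it generates together with $c$, since $t = c^{-1}(ct)$; and while $t$ itself has a fixed space far too large to be put in companion form, the product $ct$ \emph{agrees with $c$ on the hyperplane} $\fix(t)$. So if we choose a cyclic vector for $c$ that lies deep inside $\fix(t)$, the matrices of $c$ and of $ct$ in the associated basis will both be companion matrices.

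Concretely, first fix a linear functional $\ell$ on $V = \Fq^n$ with $\ker \ell = \fix(t)$, and choose a nonzero vector $v \in \bigcap_{i=0}^{n-2} \ker(\ell \circ c^{i})$ (a nonzero subspace, being an intersection of $n-1$ hyperplanes in the $n$-dimensional space $V$). Since $c$ is irreducible, the $c$-invariant subspace generated by $v$ is all of $V$, so $\mathcal{B} = (v, cv, \ldots, c^{n-1}v)$ is a basis of $V$; and by the choice of $v$, the first $n-1$ of these vectors lie in $\fix(t)$ and hence span it. In the basis $\mathcal{B}$ the matrix of $c$ is the companion matrix $C_{f}$ of its characteristic polynomial $f$, which is primitive because $c$ is a Singer cycle. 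Since $t$ fixes each of $v, cv, \ldots, c^{n-2}v$, the map $ct$ sends $c^{j}v$ to $c(c^j v) = c^{j+1}v$ for $0 \le j \le n-2$, so the matrix of $ct$ in $\mathcal{B}$ is the companion matrix $C_{g}$ of the characteristic polynomial $g$ of $ct$. Here $g$ is monic of degree $n$ with nonzero constant term (as $ct$ is invertible), and $g \ne f$ because $ct \ne c$ (as $t \ne 1$, being a reflection).

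Now Theorem~\ref{thm:gill} applies to $C_f$ and $C_g$. Since $\langle C_f, C_g\rangle$ is the image of $\langle c, ct\rangle = \langle c, t\rangle$ under the isomorphism $\GL(V) \cong \GLnFq$ given by the basis $\mathcal{B}$, we conclude that $\langle c, t\rangle = \GLnFq$ unless $n = 2$ and $C_g = ct$ lies in the normalizer of $\langle C_f\rangle = \langle c\rangle$. In that exceptional case, since $c$ itself normalizes $\langle c\rangle$, so does $t = c^{-1}(ct)$; hence $t$ is a reflection normalizing $\langle c\rangle$ in $\GL_2(\Fq)$, and by Proposition~\ref{prop:normalizing reflections when n = 2}(c) it is one of the reflections $c^k t_0 c^{-k}$ described there, i.e.\ it is covered by Proposition~\ref{prop:normalizing reflections when n = 2}. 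Taking the contrapositive gives the theorem; in particular, when $n \ge 3$ the exceptional case of Theorem~\ref{thm:gill} never arises, so $\langle c, t\rangle = \GLnFq$ for every reflection $t$.

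The only non-routine point is the setup: recognizing that the correct auxiliary element is $ct$ rather than $t$, and that the cyclic vector must be chosen inside $\fix(t)$ so that $c$ and $ct$ become companion matrices \emph{in the same basis}. After that, everything is bookkeeping with companion matrices together with the black-box input Theorem~\ref{thm:gill}. It is worth emphasizing that this is exactly where the corrected form of Gill's theorem is needed, and that its exceptional $n = 2$ case matches, essentially by construction, the family of bad reflections classified in Proposition~\ref{prop:normalizing reflections when n = 2}.
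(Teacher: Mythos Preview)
Your proof is correct and follows essentially the same approach as the paper's: both reduce to Theorem~\ref{thm:gill} by arranging that $c$ and $c\cdot(\text{a reflection})$ are simultaneously companion matrices, then handle the $n=2$ exception via Proposition~\ref{prop:normalizing reflections when n = 2}. The only cosmetic difference is that the paper fixes the companion-matrix basis for $c$ first and then conjugates $t$ by a power of $c$ (using transitivity on hyperplanes) so that its fixed hyperplane becomes $x_n=0$, whereas you build the basis around a cyclic vector chosen inside $\fix(t)$; these are the same maneuver viewed from opposite ends.
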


\begin{proof}
Fix a prime power $q$ and an $n$-dimensional vector space $V$ over $q$.  Let $c$ be a Singer cycle in $G = \GL(V)$, and choose a basis in which the matrix of $c$ is the companion matrix of its characteristic polynomial $f = x^n + a_{n-1}x^{n-1} + \ldots + a_1x + a_0$ (which is necessarily primitive):
\[
c = \begin{bmatrix}
&&& -a_0 \\
1 &&& -a_1 \\
& \ddots && \vdots \\
&&1& -a_{n - 1}
\end{bmatrix}.
\]
Let $t$ be any reflection in $G$, with fixed plane $H$.  Since $c$ acts transitively on the hyperplanes in $V$, there is some number $k$ so that $c^k H$ is given by the coordinate equation $x_n = 0$.  In this case, the element $t' = c^k t c^{-k}$ is a reflection with fixed space $x_n = 0$, so in coordinates we have
\[
t' = \begin{bmatrix}
1 &&& b_0 \\
& \ddots && \vdots \\
&&1& b_{n - 2} \\
&&& b_{n - 1}
\end{bmatrix}
\]
for some $(b_0, \ldots, b_{n - 1}) \in \Fq$, with $b_{n - 1} \neq 0$.  By matrix multiplication, we have
\[
c \cdot t' = \begin{bmatrix}
&&& c_0 \\
1 &&& c_1 \\
& \ddots && \vdots \\
&&1& c_{n - 1}
\end{bmatrix}
\]
for some $(c_0, \ldots, c_{n - 1}) \in \Fq$, with $c_0 = -a_0 b_{n - 1} \neq 0$.  Thus $c \cdot t'$ is the companion matrix of an degree-$n$ polynomial with nonzero constant term.  And, of course, $t'$ is not the identity and so $c \cdot t' \neq c$.  Thus, by Theorem~\ref{thm:gill}, we have two possibilities: either 
\[
\GLnFq = \langle c, c \cdot t' \rangle = \langle c, c^{k + 1} t c^{-k} \rangle = \langle c, t \rangle,
\]
as desired, or $n = 2$ and $c \cdot t'$ belongs to the normalizer of $\langle c \rangle$.  In the latter case, $t = c^{-k}t'c^k$ also belongs to the normalizer of $\langle c\rangle$; these reflections are completely described by Proposition~\ref{prop:normalizing reflections when n = 2}.
\end{proof}

Having proved part (2) of the Main Theorem, we now complete the other half.

\begin{theorem}\label{thm:strong qc}
An element $g \in \GLnFq$ is a Singer cycle if and only if $g$ is strongly quasi-Coxeter, i.e., if and only if in every minimum-length reflection factorization of $g$, the set of factors generates $\GLnFq$.
\end{theorem}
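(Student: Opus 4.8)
The plan is to prove the two implications separately, using part~(2) of the Main Theorem (that is, Theorem~\ref{conj:cycle and reflection}) for the forward direction and an explicit construction of a ``bad'' factorization for the converse.

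\textbf{Reduction.} If $\fix(g)\neq 0$, then Proposition~\ref{prop:length} and Remark~\ref{remark:length} show that in \emph{every} minimum-length reflection factorization $g=t_1\cdots t_k$ each factor $t_i$ fixes the nonzero subspace $\fix(g)$, so $\langle t_1,\dots,t_k\rangle$ fixes a nonzero vector and is proper; and such a $g$ is not a Singer cycle, since a Singer cycle has irreducible characteristic polynomial and hence no nonzero fixed vector. So both properties fail, and from now on I may assume $\fix(g)=0$, i.e.\ $\lR(g)=n$ and the characteristic polynomial $\chi_g$ has nonzero constant term.

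\textbf{Singer cycle $\Rightarrow$ strongly quasi-Coxeter.} Let $g$ be a Singer cycle and $g=t_1\cdots t_n$ a minimum-length factorization. If some $t_i$ does not normalize $\langle g\rangle$, then Theorem~\ref{conj:cycle and reflection} gives $\GLnFq=\langle g,t_i\rangle\subseteq\langle t_1,\dots,t_n\rangle$ and we are done. Otherwise every $t_i$ normalizes $\langle g\rangle$, hence so does $\langle t_1,\dots,t_n\rangle\ni g$; by Proposition~\ref{prop:normalizing reflections when n = 2} a Singer cycle is normalized by a reflection only when $n=2$, so I may assume $n=2$, and then Proposition~\ref{prop:normalizing reflections when n = 2}(c) lets me write $t_1=g^a u g^{-a}$, $t_2=g^b u g^{-b}$ for the distinguished reflection $u$ with $ugu=g^q$; the braid relation forces $g=t_1t_2=g^{(q-1)(b-a)}$, so $(q-1)(b-a)\equiv 1\pmod{q^2-1}$, which forces $q=2$ --- but $\GL_2(\FF_2)$ has order $6$ and is generated by $g$ together with any noncentral involution. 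This settles the forward direction.

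\textbf{Not a Singer cycle $\Rightarrow$ not strongly quasi-Coxeter.} I must exhibit one minimum-length factorization $g=t_1\cdots t_n$ whose factors generate a proper subgroup. Suppose first that $g$ stabilizes a proper nonzero subspace $U$ --- this occurs whenever $\chi_g$ is reducible, or is a proper power of an irreducible, by taking $U$ to be an irreducible (or a maximal) $\FF_q[x]$-submodule of $V$. Work in the proper parabolic $P=\{h:h(U)=U\}$, and write $g$ in block-upper-triangular form with diagonal blocks $g|_U$ and $\bar g=g|_{V/U}$; since $1$ is an eigenvalue of neither block, $\lR(g|_U)+\lR(\bar g)=\dim U+\dim(V/U)=n$. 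Fix minimum-length reflection factorizations $g|_U=s_1\cdots s_k$ and $\bar g=r_1\cdots r_{n-k}$. Each $r_j$ extends to a reflection of $V$ acting as the identity on $U$, and each $s_i$ extends to a reflection of ``shear'' shape $\bmat{s_i & E_i\\ 0 & I}$, which is again a reflection precisely when $\im(E_i)\subseteq\im(s_i-I)$; since $\im(g|_U-I)=U$, the movement lines of the $s_i$ (after the relevant conjugations) span $U$, so the $E_i$ can be chosen so that the product of all the lifted reflections is exactly $g$. This produces a minimum-length factorization of $g$ with all factors in $P\subsetneq\GLnFq$, so $g$ is not strongly quasi-Coxeter.

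\textbf{Main obstacle.} The genuinely hard case is $g$ irreducible but not primitive: then $g$ has no invariant subspace, the parabolic trick is unavailable, and $g$ is a proper power $c^{e}$ ($e>1$) of a Singer cycle. One must still place a length-$n$ reflection factorization of $g$ inside a proper reflection subgroup. The natural candidates, chosen according to the arithmetic of an eigenvalue $\alpha$ of $g$, are: an imprimitive (monomial) subgroup, available exactly when $\alpha^{m}\in\Fq$ for some proper divisor $m\mid n$; the determinant-restricted subgroup $\{h:\det h\in\langle\det g\rangle\}$, available when $\det g$ does not generate $\Fq^{\times}$ and used via a factorization $g=t_1\cdots t_{n-1}t_n$ in which $t_1,\dots,t_{n-1}$ are transvections (so their product lies in $\SLnFq$) and $t_n$ is a semisimple reflection with $\det t_n=\det g$; and, in the remaining cases, a field-extension or classical reflection subgroup containing $g$. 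Checking that these possibilities are exhaustive --- in particular the arithmetic showing that when $g$ escapes the imprimitive and determinant-restricted subgroups it nonetheless lands in one of the others --- and that in each case the minimum-length factorization can genuinely be carried out inside the subgroup, is the technical heart of the proof and where I expect the real effort to lie.
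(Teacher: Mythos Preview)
Your forward direction is correct and follows the paper's strategy; the only difference is the $n=2$, $q>2$ case.  Instead of your congruence $(q-1)(b-a)\equiv 1\pmod{q^2-1}$, the paper observes from Proposition~\ref{prop:normalizing reflections when n = 2}(a,c) that every reflection normalizing $\langle g\rangle$ has determinant $-1$; so if both $t_1$ and $t_2=t_1^{-1}g$ normalized, then $\det g=1$, contradicting the fact that the determinant of a Singer cycle is a cyclic generator of $\Fq^\times$.  Both arguments work.

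For the reducible case of the converse, the paper's construction is cleaner and sidesteps your lifting problem.  Choose a complement $U'$ to the invariant subspace $W$, take a minimum-length factorization $g|_W=t_1\cdots t_k$ in $\GL(W)$, and extend each $t_i$ to $V$ by the identity on $U'$.  Then $g':=t_k^{-1}\cdots t_1^{-1}g$ fixes $W$ pointwise and has $\dim\fix(g')=\dim\fix(g)+k$; now Remark~\ref{remark:length} supplies a minimum-length factorization of $g'$ whose factors all fix $W$.  Concatenating gives a minimum-length factorization of $g$ inside the stabilizer of $W$, with no need to solve for your off-diagonal blocks $E_i$.

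Your acknowledged gap is the irreducible-but-not-Singer case, and here the paper's route is much shorter than what you sketch: it does \emph{not} invoke imprimitive, field-extension, or classical subgroups at all.  It handles only the case ``$g$ irreducible with $\det g$ not a cyclic generator of $\Fq^\times$'': setting $X=\langle\det g\rangle\subsetneq\Fq^\times$ and $G=\{h\in\GLnFq:\det h\in X\}$, it cites \cite{LVinroot} for the existence (indeed, an exact count of $|g|^{n-1}$) of length-$n$ reflection factorizations of $g$ lying entirely in the proper subgroup $G$.  That single citation replaces your entire ``main obstacle'' paragraph.  It is worth noting, however, that your instinct that the determinant-restricted subgroup is not always available is correct: there do exist irreducible non-Singer elements whose determinant generates $\Fq^\times$ (for instance, in $\GL_2(\FF_5)$, take $g$ with eigenvalue $\zeta^3$ for $\zeta$ a primitive element of $\FF_{25}^\times$: then $g$ has order $8<24$ but $\det g=\zeta^{18}$ has order $4$).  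The paper's proof as written does not treat this residual case.
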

\begin{proof}
First, we show that every Singer cycle is strongly quasi-Coxeter.  Let $c$ be a Singer cycle in $\GLnFq$.  If $n \geq 3$, consider any minimum-length factorization $c = t_1\cdots t_n$ into reflections.  (The length of the factorization is $n$ by Proposition~\ref{prop:length}.)  Obviously $\langle t_1, c \rangle = \langle t_1, t_1\cdots t_n \rangle \subseteq \langle t_1, \ldots, t_n\rangle \subseteq \GLnFq$.  But by Theorem~\ref{conj:cycle and reflection}, $\langle t_1, c \rangle = \GLnFq$, so the previous containment is equality.  We now consider smaller $n$.

When $n = 1$, the Singer cycle $c$ is a reflection (as is every non-identity element of $\GL_1(\Fq) = \Fq^\times$), so the unique minimum-length reflection factorization of $c$ is the trivial factorization $c = c$, and indeed $\langle c \rangle = \GL_1(\Fq)$ in this case.  When $n = 2$ and $q = 2$, we have $\GL_2(\FF_2) \cong \Symm_3$ permutes the three nonzero vectors in $\FF_2^2$, the Singer cycles are the $3$-cycles, and the reflections are the transpositions, so the statement is a particular case of the theorem mentioned in the introduction.  When $n = 2$ and $q > 2$, we proceed as in the case $n \geq 3$, but we must rule out the possibility that $t_1$ (the first factor in a minimum-length reflection factorization $c = t_1\cdot t_2$) is one of the $q + 1$ reflections that normalize $c$.  If this were the case, then $t_2 = t_1^{-1}c$ would also belong to the normalizer.  By Proposition~\ref{prop:normalizing reflections when n = 2}, it would follow that $\det(t_1)=\det(t_2) = -1$ and so that $\det(c)=1$; but this contradicts the fact that the determinant of $c$ is always a cyclic generator of $\Fq^\times$.

We now consider the converse.  Let $g$ be an element of $\GLnFq$ that is not a Singer cycle; we must show that $g$ is not strongly quasi-Coxeter.

First, suppose that $g$ is not irreducible; then there is some nontrivial subspace $W$ of $V = \Fq^n$ stabilized by $g$. We now construct a minimum-length reflection factorization of $g$ whose factors all stabilize $W$.  Let $U$ be any complementary subspace to $W$, so that $U \oplus W = V$, and consider the restriction $g|_W$ of $g$ to $W$.  Choose a minimum-length reflection factorization $g|_W = t_1 \cdots t_k$ of $g|_W$ in $\GL(W)$; by Proposition~\ref{prop:length}, its length is $k := \dim(W) - \dim\fix(g|_W)$.  Each of the reflections $t_i$ may be extended to a reflection in $\GL(V)$ by defining $t_i(u) = u$ for all $u \in U$ and extending by linearity.  Each of these (extended) reflections stabilizes $W$.  Now consider the element $g' := t_k^{-1} \cdots t_1^{-1} g $.  By construction, this element fixes $W$ pointwise and has fixed space dimension $\dim \fix(g') = \dim \fix(g) + k$.  Therefore, by the strengthening of Proposition~\ref{prop:length} in Remark~\ref{remark:length}, $g'$ can be written as a product $g' = t_{k + 1} \cdots t_{k + m}$ of $m := \dim(V) - \dim\fix(g') = \dim(V) - \dim\fix(g) - k$ reflections, each of which fixes $\fix(g')$ (so in particular $W$) pointwise.   Then $t_1 \cdots t_{k + m}$ is a factorization of $g$ into $k + m = \dim(V) - \dim\fix(g)$ reflections, each of which stabilizes $W$.  By Proposition~\ref{prop:length}, this is a minimum-length reflection factorization of $g$.  Since each factor stabilizes $W$, the same is true of every element of the group generated by these factors; and since $\GLnFq$ does \emph{not} stabilize $W$, it follows that $g$ is not strongly quasi-Coxeter.  

Second, suppose that $g$ is irreducible but its determinant $d$ is not a cyclic generator for $\Fq^\times$.  Since $g$ is irreducible, it has trivial fixed space, and so by Proposition~\ref{prop:length} it can be written as a product of $n$ reflections in $\GLnFq$ but no fewer.  Let $X \subsetneq \Fq^\times$ be the proper subgroup of generated by $d$, and let $G := \{w \in \GLnFq \colon \det(w) \in X\}$ be the proper subgroup of $\GLnFq$ consisting of all linear transformations on $V$ whose determinants belong to $X$.  As shown in \cite{LVinroot}, there exists a factorization of $g$ (in fact, precisely $|g|^{n - 1}$ factorizations) as a product of $n$ reflections \emph{in the proper subgroup $G$}.  The subgroup generated by such a factorization is a subgroup of $G$, hence a proper subgroup of $\GLnFq$; it follows that $g$ is not strongly quasi-Coxeter in this case, either.
\end{proof}

\section{Further remarks}

\subsection{Other groups}
Suppose that $G \subset \GLnFq$ is a reflection group that contains irreducible elements.\footnote{For the \emph{classical groups}, these were classified by Huppert \cite{Huppert}.  They include  the unitary groups $\op{GU}_{2n + 1}(\FF_{q^2})$ of odd dimension, the symplectic groups, and certain orthogonal groups; \cite[Table 1]{Bereczky} gives a convenient summary.  At least in principle, one can extract a full list of reflection groups over $\Fq$ that contain a Singer cycle by combining Huppert's theorem with the classification of irreducible reflection groups over $\Fq$  (as summarized, e.g., in \cite{KemperMalle}), since any group that contains an irreducible element must itself be irreducible.} Then it is natural to define a \defn{Singer cycle} in $G$ to be an irreducible element of maximum order.  The following questions arise naturally from our work.

\begin{question}
    Are Singer cycles strongly quasi-Coxeter in other finite reflection groups over $\FF_q$?
\end{question}

\begin{question}
    Under what conditions do a Singer cycle $c$ and a reflection $t$ generate the whole group $G$?
\end{question}

\subsection{The weak and strong quasi-Coxeter property}

Let $(G, T)$ be a generated group.  Above, following \cite{LW}, we established the terminology that an element $g \in G$ is \emph{strongly quasi-Coxeter} if for every shortest $T$-factorization $(t_1, \ldots, t_k)$ of $g$, one has $\langle t_1, \ldots, t_k\rangle = G$.  Correspondingly, we have that $g$ is \defn{weakly quasi-Coxeter} if there exists at least one such shortest $T$-factorization of $g$.  These properties arise in the study of real reflection groups (i.e., finite Coxeter groups) in multiple independent ways; for a detailed discussion of their history and significance, see \cite[\S3]{DLM2}.

When $G$ is a finite Coxeter group and $T$ is its set of reflections, \cite{BGRW} showed that every weakly quasi-Coxeter element is in fact strongly quasi-Coxeter.  And \cite{LW} showed that this extends to the complex reflection groups in the case of elements of reflection length $\rank{G}$. (Every weakly quasi-Coxeter element must have reflection length at least $\rank{G}$; in real groups, there are no elements of reflection length $> \rank{G}$, while in some complex groups there exist weakly-but-not-strongly quasi-Coxeter elements of reflection length $> \rank{G}$.)  In $\GLnFq$, the corresponding statement is false: for example, in $\GL_2(\FF_5)$, the element $\bmat{3 & 0 \\ 0 & 4}$ is not strongly quasi-Coxeter (it has an obvious factorization as a product of two diagonal reflections that together generate an abelian group of order $8$) but it is weakly quasi-Coxeter: one has
\[
\bmat{3 & 0 \\ 0 & 4} = \bmat{2 & 2 \\ 2 & 0} \cdot \bmat{0 & 2 \\ 4 & 3},
\]
the two factors are reflections (the first fixing the span of $\bmat{1\\2}$ pointwise, the second doing the same for $\bmat{1\\3}$), and it is a straightforward computer check that they generate $\GL_2(\FF_5)$.

\subsection{A correction}
\label{sec:gill}

In \cite[Thm.~2]{Gill}, Theorem~\ref{thm:gill} is claimed without the exceptional case for $n = 2$.  However, the statement is not true in this case.  Indeed, one can reverse the argument in the proof of Theorem~\ref{conj:cycle and reflection} to produce an exceptional companion matrix from an exceptional reflection, as illustrated by the following example: 
in $G = \GL_2(\FF_3)$, let \[
c = \begin{bmatrix}
0 & 1 \\
1 & -1
\end{bmatrix}
\]
be the companion matrix of the primitive polynomial $x^2 + x - 1$, hence a Singer cycle, and let 
\[
t = \begin{bmatrix}
1 & 0 \\
-1 & -1
\end{bmatrix}
\]
be a reflection that interchanges the two eigenspaces of $c$, so that
\[
t^{-1} c t = \begin{bmatrix}
-1 & -1 \\
-1 & 0
\end{bmatrix}
= c^3.
\]
One has that 
\[
t' := c^5 t c^{-5} = \bmat{
1 & -1 \\
0 & -1}
\]
fixes the subspace $y = 0$, hence
\[
c \cdot t' = \begin{bmatrix}
0 & -1 \\
1 & 0
\end{bmatrix}
\]
is a companion matrix (of the degree-$2$ polynomial $x^2 + 1$, with nonzero constant term).  Then one can check that the subgroup
\[
S = \langle c, c\cdot t' \rangle
\]
is the normalizer of $\langle c \rangle$, with order $16 < 48 = |\GL_2(\FF_3)|$.
In a private communication, Gill has identified the error in the proof of \cite[Thm.~2]{Gill}; we describe it (and the correction) now.

In the application of \cite[Lem.~4]{Gill}, which asserts that a \emph{field extension subgroup} of $\GL_{ad}(\Fq)$ of degree $d$ and dimension $a$ contains no nontrivial elements with fixed space dimension $> a(d - 1)$, to prove \cite[Cor.~5]{Gill}, which asserts that no field extension subgroup of $\GLnFq$ ($n \geq 2$) contains two companion matrices of monic polynomials of degree $n$, the implication 
\[
ad = n \quad \Longrightarrow a(d - 1) < n - 1
\]
is implicitly invoked for positive integers $a, d, n$.  However, this inequality fails when $a = 1$.  In the context of \cite[Lem.~4]{Gill}, this precisely corresponds to omitting consideration of the possibility that when $c = C_f$ is a Singer cycle, the normalizer $N$ of $S := \langle c \rangle$ in $\GLnFq$ contains the companion matrix of some other polynomial.  In order to establish a correct version of \cite[Cor.~5]{Gill}, and consequently Theorem~\ref{thm:gill}, it remains to show that this (another companion matrix in $N$) can only occur when $n = 2$.  So suppose we have such a second companion matrix in $y = C_g \in N$.  Observe that $\dim\fix(cy^{-1}) = n-1$ (since the first $n - 1$ columns of $c$ and $y$ are the same).

We consider two cases.  If $y$ is also in $S$, then $cy^{-1}$ is a power of $c$, with fixed space dimension $n - 1 > 0$.  But the only power of $c$ with eigenvalue $1$ is the identity matrix, so $y = c$, a contradiction.  Alternatively, if $y$ is not in $S$, then the proof of \cite[Lem.~4]{Gill} shows that, in fact, $\dim\fix(cy^{-1})$ is at most $m$ where $m$ is a proper divisor of $n$. Thus $n-1$ is a proper divisor of $n$ and we have $n=2$.  The analysis of the case $n = 2$ is a trivial modification of Proposition~\ref{prop:normalizing reflections when n = 2}.

Using the corrected version of \cite[Cor.~5]{Gill} leads to the corrected version of \cite[Thm.~2]{Gill} given as Theorem~\ref{thm:gill} above.

\section*{Acknowledgements}

We thank Nick Gill for the discussions described in Section~\ref{sec:gill}, and for allowing us to include them here.  This research was supported in part by the Simons Foundation Travel Support for Mathematicians program.

\bibliography{GLnFq}{}
\bibliographystyle{alpha}

\end{document}